\documentclass{article}
\PassOptionsToPackage{numbers}{natbib}
\usepackage[preprint]{nips_2018}

\usepackage{theorem,ifthen,algorithm,algorithmic}
\usepackage{amssymb,amsfonts,amsmath,latexsym,dsfont}
\usepackage{tikz,pgflibraryplotmarks}
\usepackage{fullpage}
\usepackage{graphicx}
\usepackage{mathrsfs}
\usepackage{boxedminipage}

\usepackage{pgf,pgfarrows} 
\usepackage{subfigure} 






\usepackage[utf8]{inputenc} 
\usepackage[T1]{fontenc}    
\usepackage{url}            
\usepackage{booktabs}       
\usepackage{amsfonts}       
\usepackage{nicefrac}       
\usepackage{microtype}      

\usepackage{theorem,ifthen,algorithm,algorithmic}
\usepackage{amssymb,amsfonts,amsmath,latexsym,dsfont}
\usepackage{tikz,pgflibraryplotmarks}
\usepackage{fullpage}
\usepackage{graphicx}
\usepackage{mathrsfs}

\graphicspath{{figLS/}{figMNIST/}}

\usepackage{boxedminipage}

\usepackage{pgf,pgfarrows} 
\usepackage{subfigure} 

\newcommand{\FrameboxA}[2][]{#2}
\newcommand{\Framebox}[1][]{\FrameboxA}

\newcommand{\hf}{{\frac 12}}

\newcommand{\grad}{\ensuremath{\nabla}}

\newcommand{\bfA}{{\bf A}}
\newcommand{\bfB}{{\bf B}}

\newcommand{\bfH}{{\bf H}}
\newcommand{\bfI}{{\bf I}}
\newcommand{\bfJ}{{\bf J}}

\newcommand{\bfQ}{{\bf Q}}

\newcommand{\bfW}{{\bf W}}
\newcommand{\bfX}{{\bf X}}

\newcommand{\bfb}{{\bf b}}
\newcommand{\bfc}{{\bf c}}

\newcommand{\bfx}{{\bf x}}

\newcommand{\bfd}{{\bf d}}

\newcommand{\bft}{{\bf t}}
\newcommand{\bfg}{{\bf g}}

\newcommand{\Ex}{{\mathbb E }} 

\newcommand{\Cov}{{\mathbb C {\rm ov}}}

\newcommand{\bfOmega}{{\boldsymbol \Omega}}
\newcommand{\bftheta}{{\boldsymbol \theta}}

\newcommand{\bfomega}{{\boldsymbol \omega}}

\newcommand{\R}{\ensuremath{\mathds{R}}}

\usepackage{xcolor}

\newtheorem{theorem}{Theorem}
\newtheorem{proof}{Proof}
\newtheorem{example}{Example}

\title{Never  look back - A modified EnKF method and its application to the training of neural networks without back propagation}

%

\author{
  Eldad Haber\thanks{Department of Earth and Ocean Science, The University of British Columbia, Vancouver, BC, Canada}$^{\;}$\thanks{Xtract Inc., Vancouver, Canada}\;,  Felix Lucka\thanks{Computational Imaging, Centrum Wiskunde \& Informatica (CWI), The Netherlands; Department of Computer Science, University College London, UK}\; and Lars Ruthotto\thanks{Department of Mathematics and Computer Science, Emory University, USA}$^{\;}$\footnotemark[2]
 }

\begin{document}

\maketitle

\begin{abstract}
In this work, we present a new derivative-free optimization method and investigate its use for training neural networks.
Our method is motivated by the Ensemble Kalman Filter (EnKF), which has been used successfully for solving optimization problems that involve large-scale, highly nonlinear dynamical systems.  A key benefit of the EnKF method is that it requires only the evaluation of the forward propagation but not its derivatives. Hence, in the context of neural networks it alleviates the need for back propagation and reduces the memory consumption dramatically. However, the method is not a pure "black-box" global optimization heuristic as it efficiently utilizes the structure of typical learning problems.   
Promising first results of the EnKF for training deep neural networks have been presented recently by Kovachki and Stuart. 
We propose an important modification of the EnKF that enables us to prove convergence of our method to the minimizer of a strongly convex function. Our method also bears similarity with implicit filtering and we demonstrate its potential for minimizing highly oscillatory functions using a simple example. Further, we provide numerical examples that demonstrate the potential of our method for training deep neural networks.  
\end{abstract}

\section{Introduction}

Advances in numerical algorithms for training neural networks have been fueling the recent deep learning revolution. 
Virtually all training algorithms employ derivative-based optimization methods, and in particular, stochastic gradient descent (SGD) methods  (see, e.g., \cite{Bertsekas2011,Bottou2012,Zhang-et-al-2018,deepLearning2012,GoodfellowEtAl2016,KeskarEtAl2016,LinderothShapiroWright2006,
JuditskyLanNemirovskiShapiro2009,shapiroBook} and reference within).

A key requirement of derivative-based optimization methods is an efficient way of computing the gradient of the objective function, which is used to update the parameters to be optimized. 
When training neural networks, this is commonly done using back propagation~\cite{williams1986learning}, i.e., propagating prediction errors from the output layer backwards through the  network. 
Back propagation relies upon a means to store or quickly recompute hidden features and activations, which can be prohibitive when the neural network is deep or when the size of the data is large, e.g., when training a network to learn from videos or 3D datasets.
While the storage problem is less pronounced when using stochastic gradient methods, the performance of these methods depends in a non-trivial way on parameters like batch size and learning rate~\cite{GoodfellowEtAl2016}, which often need to be tuned manually by solving the problem repeatedly.  
Also, SGD is very difficult to parallelize efficiently.

To overcome some of the drawbacks associated with derivative-based learning algorithms,  we propose a new derivative-free optimization method that can be seen as a slightly modified version of the Ensemble Kalman Filter (EnKF). 
Our method can either augment or replace the SGD for the training of neural networks. 
EnKF has a long track record in data assimilation and been applied to large-scale optimization problems, e.g., in weather and flow prediction~\cite{FahimuddinPhd,NennaEtAl2011,ott03a,hun07a,hou01a,eve03a,eve00a}.
In recent years, a version of the EnKF method that recovers an unknown parameter from noisy data has been applied to inverse problems~\cite{IgLaSt13,Ig15,SchillingsStuart2017}.
Key advantages of the EnKF method  for large-scale problems is that it can be easily parallelized and is derivative-free, i.e., it requires only evaluations of the forward operator. Another advantage of EnKF, that is particularly attractive for highly nonlinear forward problems is its potential to explore the objective function. However, EnKF is not a "black-box" metaheuristic for global optimization of arbitrary functions, like, e.g., genetic algorithms, which were also tested for deep learning recently \cite{2017arXiv171206567P}. It makes explicit and efficient use of the specific structure of learning problems. 

In this paper, we continue the work of Kovachki and Stuart  for training neural networks~\cite{KovachkiStuart2018, KovachkiStuart2018slides}, which is based on the EnKF presented in \cite{SchillingsStuart2017}. 
We modify this scheme and prove that our method converges to the global minimizer of a strongly convex function at a sublinear rate.
This is an improvement over the method discussed in~\cite{SchillingsStuart2017}, which converges to the projection of the minimizer onto a low-dimensional subspace.
Furthermore, our method can be parallelized easily, which  can be attractive in a high-performance computing environment.
Using an intuitive heuristic argument and a simple numerical experiment we also show how avoiding back propagation can be advantageous for problems that exhibit local high frequency oscillations. 
These properties motivate us to use our method for training deep neural networks even though we are not able to prove convergence to the global solution of such non-convex problems.
Using derivative-free optimization methods for training deep neural networks has the advantage that no back propagation is needed and therefore, it is possible to work with arbitrary long networks without any memory limitations.
We illustrate the potential of our method using a classical convolutional neural network for solving the MNIST problem of classifying hand-written digits~\cite{LeCunEtAl1990} and show that our method converges faster than a SGD type method.

Our paper is organized as follows. 
In Sec.~\ref{sec:enkf}, we derive our method by modifying the version of the Ensemble Kalman Filter presented in~\cite{SchillingsStuart2017}.
In Sec.~\ref{sec:conv}, we state and prove our convergence result for strongly convex functions and discuss the relation of our method to SGD. 
In Sec.~\ref{sec:imp}, we present several practical improvements of our method. 
In Sec.~\ref{numex}, we present preliminary numerical results for strongly convex problems and the training of some common neural networks.
In Sec.~\ref{sec:conc} we provide some discussion and concluding remarks.

\section{The Ensemble Kalman Filter}
\label{sec:enkf}

Among the many variants of the Ensemble Kalman Filter (EnKF), we focus our attention 
on the recent version presented in \cite{SchillingsStuart2017}. This version simplifies much of the original EnKF discussion and allows for simple analysis and explanation. The method is an iterative method for the minimization of a function of the form
\begin{eqnarray}
\label{obj}
\phi(\bftheta) = {\cal D}(F(\bftheta)),
\end{eqnarray}
where $\bftheta \in \R^n$ denotes the parameter to be recovered, 
$F:\R^n \rightarrow \R^m$ is the forward operator, and ${\cal D}:\R^m \rightarrow \R$ is a loss or misfit function.
This formulation includes, e.g., linear least-squares problems that are obtained by choosing ${\cal D}(\bft) = \hf \|\bft \|^2$ and $F(\bftheta) = \bfA \bftheta - \bfb$ for fixed $\bfA \in \R^{m\times n}$ and $\bfb\in\R^m$.
For most classification problems, $F(\bftheta)$ is a neural network and ${\cal D}(\cdot)$
is the soft-max function. The key advantage of EnKF is that it does not rely on derivatives of $F$ with respect to $\theta$, which is usually computationally demanding, one only needs to evaluate the derivative of the loss function, which is computationally cheap. We will therefore call it "derivative-free" and use the short notation $\grad {\cal D} (\theta)$ for $\grad_t{\cal D} (t)$ evaluated at $t = F(\theta)$, but again stress that this is a rather different approach to other "derivative-free" optimization heuristics that try to minimize $\phi(\bftheta)$ without utilizing any of its structure, i.e., by evaluations of $\phi$ for given $\theta$'s only.  

The first and simplest version of our algorithm consists of three steps that are listed in Algorithm~\ref{alg1}.
In step one of each iteration $j$ we randomly choose $k$ different particles $\bftheta_i$, $i=1,\ldots,k$ i.i.d. as $\bftheta \sim \bar\bftheta_j + \bfomega$. The distribution of $\bfomega$ needs fulfill $\Ex \bfomega = 0$, $\Cov(\bftheta) = \sigma^2 \bfI$ (where $\bfI$ denotes the identity matrix), and any higher moments up to forth order need to exist. Possible examples are given by $\bfomega \sim {\cal N}(0, \sigma^2 \bfI)$, or $\bfomega = \pm \sigma$ with equal probability (Rademacher distribution).
The second step requires one to evaluate the forward operator (but not its derivatives) $k$ times. 
In the third step of the algorithm, one needs to compute a step, by using a matrix, $\bfH_j$ and computing its inverse times a residual vector. The matrix can be chosen as an identity matrix  or
as $\bfH_j = \bfQ_j^{\top} \bfQ_j + {\boldsymbol \Gamma}$, 
where $ {\boldsymbol \Gamma}$ is a data Covariance matrix. The latter leads to the usual Kalman Gain matrix  \cite{eve00a,IgLaSt13,Ig15,SchillingsStuart2017}. To update the solution, we choose the step size $\mu_j$ using a simple Armijo line search to ensure reduction of the objective function. We provide some justification for this in Sec.~\ref{sec:connection}.
\begin{algorithm}[t]
\begin{algorithmic}
	\STATE \textbf{Inputs:} Starting guess $\bar\bftheta_0\in\R^n$, parameter $\sigma>0$, number of particles $k$, covariance matrix ${\boldsymbol \Gamma}$, and maximum number of iterations $N$.
\FOR{$j=0,1,\ldots,N-1$}
\STATE[1] Draw $k$ particles i.i.d. as $\bftheta_i =  \bar \bftheta_j + \bfomega_i$, $i=1,\ldots,k$, with $\mathbb{E}(\bfomega)=0$ and ${\rm Cov}(\bfomega)=\sigma^2\bfI$. Set $\bfOmega_j = [\bfomega_1,\ldots,\bfomega_k] \in \R^{n\times k}$.
\STATE[2] Evaluate the forward operator $F$ on each particle $\bftheta_i$ and on $\bftheta_j$ to construct
\begin{eqnarray}
\label{eqQ}
\bfQ_j = F(\bar\bftheta_j + \bfOmega_j)  - F(\bar\bftheta_j) := [F(\bftheta_1)  - F(\bar\bftheta_j),\ldots,F(\bftheta_k)  - F(\bar\bftheta_j)] \in \R^{m\times k}
\end{eqnarray}
\STATE[3] Set $\bfH_j=\bfI$ or $\bfH_j = \bfQ_j^{\top} \bfQ_j + {\boldsymbol \Gamma}$ and update
\begin{eqnarray}
\label{eqT}
  \bar\bftheta_{j+1} = \bar\bftheta_j - \mu_j \bfOmega_j \bfH_j^{-1} \bfQ_j^{\top}
\grad {\cal D}(\bar \bftheta_j),
\end{eqnarray}
where $\mu_j$ is determined using a line search and $\grad {\cal D}(\bar \bftheta_j)$ denotes the gradient of the loss function, $\nabla_t {\cal D}(t)$ evaluated at $t = F(\bar \bftheta_j)$. 
\ENDFOR
\STATE \textbf{Output:} Optimized parameters $\bar\bftheta_{N}$.
\end{algorithmic}
\caption{Derivative-free algorithm for solving $\min_{\bftheta} {\cal D}(F(\bftheta))$ \label{alg1}}
\end{algorithm}

At this stage, we point to an important modification to the EnKF as presented in
in \cite{eve00a,IgLaSt13,Ig15,SchillingsStuart2017}, which allows us to better analyze its convergence properties.
First, we re-sample the particles at each iteration. This allows us to use a simple convergence proof of the method to the global minimum of a strongly convex function. We suspect that this requirement can be relaxed in future work.
A second change from the original algorithm is at step 2, where the original EnKF algorithm defines 
the matrix $\bfQ_j$ as the difference between $F(\bar \bftheta_j + \Omega_j)$ and its mean over all the particles. 
Clearly, both versions are the same for a linear forward problem.
Finally, the last difference between our method to the original EnKF method is that
the original EnKF algorithm propagates all the particles while 
we  only update the mean of their distribution.
It is also possible to propagate all the particles but we avoid this here for simplicity. 
As we see next, these modifications allow us to prove convergence of our algorithm and the insight we obtain this way allows us to extend the method and propose ways to further accelerate it.

\section{Convergence of our algorithm and comparison to existing methods}
\label{sec:conv}
We first prove that when applied to strongly convex problems our method converges to the global minimizer. In Sec.~\ref{sec:connection} we compare our method to stochastic gradient methods and other methods commonly used in derivative-free optimization. 

\subsection{Convergence analysis for strongly convex problems}

To prove convergence of the method we need some assumptions that
are typically used also for the proof of convergence for stochastic gradient descent methods.
We prove convergence for the case $\bfH_j=\bfI$ and note that it can be extended for any symmetric positive definite $\bfH_j$.

\begin{itemize}
\item[A1:] The function $\phi(\bftheta) = {\cal D}(F(\bftheta))$ is smooth and strongly convex, i.e., there exists a $L>0$ such that for all $\bftheta_1,\bftheta_2 \in \R^n$
$$ {\cal D}(F(\bftheta_2)) - {\cal D}(F(\bftheta_1)) \ge (\bftheta_2 - \bftheta_1)^{\top} \bfJ(\bftheta_1)^{\top} \grad {\cal D}(\bftheta_1) + {\frac L2} \|\bftheta_2 - \bftheta_1\|^2. $$
Here, $\bfJ(\bftheta) = \grad_{\bftheta}F(\bftheta)^{\top} \in\R^{m\times n}$ is the Jacobian of the mapping of $F$.
\item[A2:] 
The Taylor expansion error of $F$ can be made sufficiently small, i.e.,
assuming that we sampled an $\bfomega$ with $\| \bfomega \| = {\cal O}(\sigma)$, we have that
$$ F(\bar\bftheta + \bfomega) - F(\bar\bftheta) = \bfJ(\bar \bftheta) \bfomega + 
{\cal O}(\sigma^2). $$
\end{itemize}
%
%

\bigskip

Using A2 we can write the matrix $\bfQ_j$ in~\eqref{eqQ}  as
$$ \bfQ_j = \bfJ_j \bfOmega_j + {\cal O}(\sigma^2).$$
Plugging this into \eqref{eqT} we obtain
\begin{eqnarray}
\label{EnkFupdateMean2}
  \bar\bftheta_{j+1} = \bar\bftheta_j - \mu_j \left( \bfOmega_j \bfOmega_j^{\top}\bfJ_j^{\top}
\grad {\cal D}(\bar \bftheta_j) + {\cal O} (\sigma^2) \right) .
\end{eqnarray}
From our assumptions on the distribution of the particles, it easily follows that $\Ex \, \bfOmega_j \bfOmega_j^{\top} = k \sigma^2 \bfI$. If we define $\bfg_j  := \bfOmega_j \bfOmega_j^{\top}\bfJ_j^{\top} 
\grad {\cal D}(\bar \bftheta_j)$, we can compute that
\begin{eqnarray}
\label{Exg}
 \Ex \, \bfg_j = k \sigma^2 \bfJ_j^{\top}\grad {\cal D}( \bar\bftheta_j) = k \sigma^2 \grad \phi(\bar\bftheta_j), \qquad  \Ex \, \| \bfg_j \|^2  = \| \grad \phi(\bar\bftheta_j) \|^2 + {\cal O} (k^2 \sigma^4) .
 \end{eqnarray}
Now, let $\bftheta^*$ be the minimizer of \eqref{obj}. Using A1  we have that
\begin{eqnarray*}
\phi(\bftheta^*) - \phi(\bar\bftheta_j) &\ge& (\bftheta^* - \bar\bftheta_j)^{\top} \bfJ_j^{\top}\grad {\cal D}( \bar\bftheta_j) + {\frac L2} \|\bar\bftheta_j - \bftheta^*\|^2 \\
\phi(\bar\bftheta_j) - \phi(\bftheta^*) &\ge& (\bar\bftheta_j - \bftheta^*)^{\top} \bfJ_*^{\top}\grad {\cal D}( \bftheta_*) + {\frac L2} \|\bar\bftheta_j - \bftheta^*\|^2 = {\frac L2} \|\bar\bftheta_j - \bftheta^*\|^2,
\end{eqnarray*}  
where $\bfJ_* := \bfJ(\bftheta^*)$, and we used that the gradient at $\bftheta^*$  vanishes. Summing both inequalities  we obtain
\begin{eqnarray}
\label{s1}
(\bar\bftheta_j - \bftheta^*)^{\top} \bfJ_j^{\top}\grad {\cal D}( \bar\bftheta_j) \ge L \|\bar\bftheta_j - \bftheta^*\|^2.
\end{eqnarray}

Let us assume that within Algorithm~\ref{alg1}, we arrived at a concrete $\bar \bftheta_{j}$. We now derive an inequality that describes how the distance $\|\bar \bftheta_{j+1} -  \bftheta^*\|^2 $ compares to  $\| \bar\bftheta_{j} -\bftheta^* \|^2 $ \textit{in expectation}. Technically, this means that all expected values here are conditioned on $\bar \bftheta_{j}$ and only account for the randomness induced by drawing a new set of particles, which is manifest in $\bfOmega_j$. We first use \eqref{EnkFupdateMean2} and \eqref{Exg} to estimate:
\begin{align*}
 \Ex \|\bar\bftheta_{j+1} - \bftheta^*\|^2 &=
\Ex \|\bar\bftheta_{j} - \mu_j (\bfg_j + {\cal O} (\sigma^2)) - \bftheta^*\|^2 \\
&= \Ex \, \|\bar\bftheta_{j}  - \bftheta^*\|^2 - 2 \mu_j \Ex \, \bfg_j^{\top}
(\bar\bftheta_{j} - \bftheta^*) + {\cal O} (\mu_j \sigma^2) + {\cal O} (\mu_j^2  k^2 \sigma^4) \\
& \le \Ex\|\bar\bftheta_{j}  - \bftheta^*\|^2 - 
2 \mu_j k \sigma^2 \grad \phi(\bar\bftheta_j)^{\top}
(\bar\bftheta_{j} - \bftheta^*) + {\cal O} (\mu_j \sigma^2) + {\cal O} (\mu_j^2  k^2 \sigma^4)
\end{align*}
Using \eqref{s1} we obtain
\begin{align}
\label{Inequality}
 \Ex \|\bar\bftheta_{j+1} - \bftheta^*\|^2 \le 
&= (1-2 \mu_j L k \sigma^2 )\Ex  \|\bar\bftheta_{j} - \bftheta^*\|^2 + {\cal O} (\mu_j \sigma^2) + {\cal O} (\mu_j^2  k^2 \sigma^4) 
\end{align}

We can now prove convergence by induction when we choose the learning rate
$\mu_j = \frac{1}{jL k \sigma^2}$.
\begin{theorem}
	\label{theorem1}
	At the $j$-th iteration, it holds that
\begin{eqnarray}
\label{conv}
 \Ex\|\bar\bftheta_{j} - \bftheta^*\|^2 \le \frac {C}{j}, 
\end{eqnarray}
where the constant $C$ only depends on the initial distance, $\| \bftheta_0 - \bftheta^* \|$, the smoothness of $\phi$, $L$, and $k$.
\end{theorem}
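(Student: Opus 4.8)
The plan is to reduce the one–step estimate \eqref{Inequality} to a scalar recursion and close it by induction, following the standard stochastic-approximation template. First I would write $a_j := \Ex\|\bar\bftheta_j - \bftheta^*\|^2$ and substitute the learning rate $\mu_j = \frac{1}{jLk\sigma^2}$ into \eqref{Inequality}. The contraction factor becomes $1 - 2\mu_j Lk\sigma^2 = 1 - \frac{2}{j}$, while the two remainder terms evaluate to $\mu_j\sigma^2 = \frac{1}{jLk}$ and $\mu_j^2 k^2\sigma^4 = \frac{1}{j^2L^2}$. Note that the explicit $\sigma$–dependence cancels in both, consistent with the claim that $C$ is independent of $\sigma$. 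Thus \eqref{Inequality} takes the form
\[
 a_{j+1} \;\le\; \Bigl(1-\tfrac{2}{j}\Bigr) a_j \;+\; \frac{d_1}{j} \;+\; \frac{d_2}{j^2},
\]
with constants $d_1,d_2$ depending only on $L$, $k$, and the hidden constants in A2.

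Next I would run the induction $a_j \le C/j$. If the additive error were purely of order $j^{-2}$, i.e. $a_{j+1}\le(1-\tfrac2j)a_j + \tfrac{d}{j^2}$, the argument is routine: assuming $a_j\le C/j$ gives
\[
 a_{j+1} \;\le\; \Bigl(1-\tfrac2j\Bigr)\frac{C}{j} + \frac{d}{j^2} \;=\; \frac{Cj - 2C + d}{j^2},
\]
and the desired bound $a_{j+1}\le \frac{C}{j+1}$ is equivalent, after clearing denominators, to $(d-C)j + (d-2C)\le 0$, which holds for every $j$ as soon as $C\ge d$. Choosing $C$ to also dominate the base case (some $a_{j_0}$ with $j_0\ge 3$, where the factor $1-\tfrac2j$ is positive, and $a_{j_0}$ controlled via $\|\bar\bftheta_0-\bftheta^*\|$) then closes the induction and yields \eqref{conv}. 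The contraction coefficient being $1-\tfrac2j$ with constant $2>1$ is exactly what makes this telescoping succeed.

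The main obstacle is the remaining bias term $\frac{d_1}{j}$, which stems from the Taylor remainder ${\cal O}(\sigma^2)$ in assumption A2 and is of the \emph{same} order as the leading contribution $a_j/j$; taken literally it destroys the $C/j$ rate, since repeating the computation above leaves an unkillable $d_1 j$ on the wrong side of the inequality. I would attack this in two steps. First, sharpen the estimate of the cross term $-2\mu_j\,\Ex[\,{\cal O}(\sigma^2)^{\top}(\bar\bftheta_j-\bftheta^*)\,]$ by Cauchy--Schwarz and Jensen, which replaces the crude bound by one proportional to $\mu_j\sigma^2\sqrt{a_j}$ and hence, under the inductive hypothesis, by a term of order $j^{-3/2}$. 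Since even a $j^{-3/2}$ perturbation is still of larger order than the $j^{-2}$ that the telescoping induction can absorb, the second step is to treat the Taylor remainder as genuinely higher order — either by working to leading order in a small sampling radius $\sigma$, or by letting $\sigma=\sigma_j$ decay with the iterations — so that the recursion collapses to the pure $a_{j+1}\le(1-\tfrac2j)a_j+{\cal O}(j^{-2})$ form and the induction of the previous paragraph applies. Making precise the regime in which this bias is legitimately absorbed, while keeping the constant $C$ independent of $\sigma$, is the delicate point I expect to require the most care.
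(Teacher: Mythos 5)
Your proposal follows essentially the same route as the paper's proof: substitute $\mu_j=\frac{1}{jLk\sigma^2}$ into \eqref{Inequality}, reduce to the scalar recursion $a_{j+1}\le\bigl(1-\frac{2}{j}\bigr)a_j+\frac{d_1}{j}+\frac{d_2}{j^2}$, and close by induction. Your treatment of the pure $j^{-2}$ case (the induction closes once $C\ge d$, using that the contraction constant $2$ exceeds $1$) is correct and is exactly the mechanism the paper relies on.

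The obstacle you single out --- the bias term $\frac{d_1}{j}$ coming from ${\cal O}(\mu_j\sigma^2)$ with $\mu_j\sigma^2=\frac{1}{jLk}$ --- is a genuine gap, and it is precisely where the paper's own write-up goes wrong. The final chain in the paper's proof reads
\begin{equation*}
\Bigl(1-\tfrac{2}{j}\Bigr)\frac{C}{j}+\frac{C}{j}+\frac{C}{j^2}
= C\Bigl(\frac{2}{j}-\frac{1}{j^2}\Bigr)\le\frac{C}{j+1},
\end{equation*}
and the last inequality is false for every $j\ge1$: clearing denominators it requires $j^2+j-1\le0$. So the paper does not actually absorb the $\Theta(1/j)$ bias either; your observation that such a recursion saturates at a constant of order $d_1$ rather than decaying like $1/j$ is correct. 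Your proposed repairs are also the right ones: Cauchy--Schwarz on the cross term only improves the bias to $j^{-3/2}$, which is still too large, so one must either treat the Taylor remainder in A2 as genuinely higher order in $\sigma$ or let $\sigma_j$ decay with $j$, at which point the clean $\bigl(1-\frac{2}{j}\bigr)a_j+{\cal O}(j^{-2})$ recursion and your induction apply. In short, you did not fully close the argument, but neither does the paper, and you correctly identified the one step that needs an additional hypothesis or a sharper estimate.
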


\begin{proof}
For $j= 0$, it is clear that as we pick $\bftheta_0$ manually, $\Ex \| \bftheta_0 - \bftheta^* \| = \| \bftheta_0 - \bftheta^* \|$ can be bounded in the above way. The terms $ {\cal O} (\mu_j \sigma^2) + {\cal O} (\mu_j^2  k^2 \sigma^4) $ in \eqref{Inequality} all come from smoothness assumptions on $\phi$ and can thus be bounded accordingly. If we choose the learning rate $\mu_j = \frac{1}{jL k \sigma^2}$ and absorb remaining dependencies on $k$ and $L$ in $C$ as well, we obtain
\begin{align}
\label{Inequality2}
 \Ex \|\bar\bftheta_{j+1} - \bftheta^*\|^2 \le 
&= \left(1- \frac{2}{j} \right)\Ex  \|\bar\bftheta_{j} - \bftheta^*\|^2 + \frac{C}{j} + \frac{C}{j^2} .
\end{align}
If we then assume that the convergence holds at iteration $j$, we prove that it holds for $j+1$: 
\begin{align*}
  \Ex\|\bftheta_{j+1} - \bftheta^*\|^2 &\le  
 \left(1- \frac{2}{j} \right) \frac{C}{j}  + \frac{C}{j} + \frac{C}{j^2} = C \left( \frac{1}{j} - \frac{2}{j^2} + \frac{1}{j} + \frac{1}{j^2}\right) \le  {\frac {C}{j+1}}.
\end{align*}
\end{proof}

\subsection{Comparison to other methods}
\label{sec:connection}
It is interesting to note the difference between our derivative-free optimization algorithm  to stochastic gradient descent (SGD) and its variants. To this end, note that the SGD iteration can be written as
$$ \bftheta_{j+1} = \bftheta_j - \eta_j 
\bfJ_j^{\top} \bfX_j \bfX_j^{\top} \grad {\cal D}(\bftheta_j), $$
where the rows of $\bfX_j\in\R^{s\times m}$ are randomly chosen rows of the $m\times m$ identity matrix and $\eta_j$ is the learning rate.
Although the choice of the learning rate in Theorem~\ref{theorem1} is similar to the one commonly used to prove convergence of SGD, it is important to note one major difference between our update rule and the SGD step.
In contrast to SGD step, the step size in our method can always be chosen so that the objective function is non-increasing. This is because the computed direction is a descent direction over the subspace spanned by $\bfOmega_j$, since
$$ -\grad \phi^{\top} \bfOmega_j \bfOmega_j^{\top} \bfJ_j^{\top} \grad {\cal D} 
= - (\grad {\cal D})^{\top} \bfJ_j^{\top}\bfOmega_j \bfOmega_j^{\top} \bfJ_j^{\top} \grad {\cal D} \le 0. $$
 Hence, common line search strategies can be used to ensure that the objective is  monotonically non-increasing. As we see in the numerical examples this makes the selection of hyper parameters easier than in the case of SGD.
 
 Of course this property does not come without a price. This version of the method requires the propagation of all the data making it expensive for problems where the number of data is very large. Nonetheless, it is possible to combine the method with a stochastic selection of the data making it competitive with standard SGD iteration.
 In this case, the iteration has the form
\begin{eqnarray}
\label{EnGFSGD}
 \bar\bftheta_{j+1} = \bar\bftheta_j - \mu_j 
\bfOmega_j \bfH_j^{-1}\bfQ_j^{\top} \bfX_j \bfX_j^{\top}  \grad {\cal D}(\bftheta_j).
\end{eqnarray}
It is straight forward to extend the proof of convergence for this case when $\bfX_j$ and $\bfOmega_j$ are uncorrelated.

Although the convergence proof covers only strongly convex problems, one motivation to use our derivative-free method for highly nonlinear problems can be made by looking at the behavior of the method for problems with small, yet, high-frequency oscillations. 
\begin{example}\label{exam:oscillatory}
Let $l\in\R$, $\epsilon>0$, $\bfA \in \R^{m\times n}$, and $\bfB \in \R^{m\times n}$ be given and
$$ F(\bftheta) = S(\bftheta) + \epsilon N(\bftheta) = \bfA \bftheta + \epsilon \sin(l \bfB \bftheta) $$
where $S$ is a smooth function and $N$ is an oscillatory one,
and assume the least squares misfit function, ${\cal D}(\bft) =\hf\|\bft\|^2$.
In this case the necessary conditions for a minimum are
$$\bfJ^{\top} \grad {\cal D}(\bftheta) =  (\bfA^{\top} + \epsilon l\bfB^{\top} {\rm diag} (\cos(l \bfB \bftheta)) )  
(\bfA \bftheta + \epsilon \sin(l \bfB\bftheta)  - \bfd^{\rm obs}) = 0 $$
where $\bfJ = \bfA + \epsilon l {\rm diag} (\cos(l \bfB \bftheta))\bfB $.
Clearly, the misfit function can have many local minima. 
 For large values of $l$, the function presents some local highly oscillatory behavior as well as some ``slow'' low-frequency modes. As a result, if we are to compute the solution using a derivative-based technique, we would probably fail.

It is  important to understand the cause of the oscillatory gradients. Note that the gradient of our function is simply
$$ \grad_{\bftheta} {\cal D} = \bfJ^{\top} \grad {\cal D} = 
  (\grad S)^{\top} {\cal D} + \epsilon (\grad N)^{\top}\grad {\cal D}.$$
  While $\grad {\cal D}$ is smooth or oscillates mildly, the second part, $\bfJ$ contains much higher oscillations.
  For the problem at hand, $\grad S$ is linear, however, the oscillations in $\grad N$ are amplified by a factor of $l$. Indeed, even if the function has small oscillations, its derivative has much larger oscillations and therefore, the solution can be difficult to obtain.
The source of the oscillations can be tracked to the gradient of the oscillatory part $N$. 

It is important to note that the approximate Jacobian $\bfQ_j =  F(\bar\bftheta_j+\bfOmega_j ) - F(\bar\bftheta)$, computed in our method does not suffer from this problem as long as we choose the elements of $\bfOmega$ sufficiently large. This is because we only evaluate the forward operator and determining the step is similar to numerical differentiation.  Thus, our method may present better properties for problems with highly oscillatory local behavior compared with derivative-based (i.e. back propagation) descent methods.  

 \end{example}

The above discussion shows that there is a strong link between the EnKF method and implicit filtering discussed in \cite{kelley3}. In fact, the EnKF can be seen as a stochastic version of implicit filtering, that is known to work well for noisy functions.
Finally, the method can be seen as a particular implementation of a stochastic coordinate descent, where the coordinates are picked by the matrix $\bfOmega_j$ and the gradient is evaluated numerically.

\section{Practical improvements and extensions} 
\label{sec:imp}

While Algorithm~\ref{alg1} can be used directly for the solution of the training problem, some simple modifications can boost its performance considerably.

\subsection{Reducing the number of particles per iteration}

The computational cost at each iteration scales linearly with the number
of particles used and therefore, it is desirable to reduce the number of particles used at each iteration. 
To this end, we propose to re-use the matrices computed from previous particles at earlier iterations at the computation.
The modified algorithm is described in Algorithm~\ref{alg2}. 
\begin{algorithm}[t]
\begin{algorithmic}
\STATE \textbf{Inputs:} Starting guess $\bar\bftheta_0\in\R^n$, parameter $\sigma>0$, number of particles $k$, covariance matrix ${\boldsymbol \Gamma}$, and maximum number of iterations $N$. Allocate $\bfOmega=[]$ and $\bfQ = []$
\FOR{$j=0,1,\ldots,N-1$}
\STATE[1] Draw $k$ particles i.i.d. as $\bftheta_i =  \bar \bftheta_j + \bfomega_i$, $i=1,\ldots,k$, with $\mathbb{E}(\bfomega)=0$ and ${\rm Cov}(\bfomega)=\sigma^2\bfI$. Set $\bfOmega_j = [\bfomega_1,\ldots,\bfomega_k] \in \R^{n\times k}$.

\STATE[2] Evaluate the forward operator $F$ on each particle $\bftheta_i$ and on $\bftheta_j$ to construct
\begin{eqnarray}
\label{eqQMem}
\bfQ_j = F(\bar\bftheta_j + \bfOmega_j)  - F(\bar\bftheta_j) := [F(\bftheta_1)  - F(\bar\bftheta_j),\ldots,F(\bftheta_k)  - F(\bar\bftheta_j)] \in \R^{m\times k}
\end{eqnarray}
\STATE[3] Update matrices $\bfOmega \leftarrow [\bfOmega, \bfOmega_j]$ and
$\bfQ \leftarrow [\bfQ, \bfQ_j]$ by adding columns.
\STATE[4] Set $\bfH_j=\bfI$ or $\bfH_j = \bfQ^{\top} \bfQ + {\boldsymbol \Gamma}$ and update
\begin{eqnarray}
\label{eqTMem}
  \bar\bftheta_{j+1} = \bar\bftheta_j - \mu_j \bfOmega \bfH_j^{-1} \bfQ^{\top}
\grad {\cal D}(\bar \bftheta_j)
\end{eqnarray}
where $\mu_j$ is determined using a line search and $\grad {\cal D}(\bar \bftheta_j)$ denotes the gradient of the loss function, $\nabla_t {\cal D}(t)$ evaluated at $t = F(\bar \bftheta_j)$.
\STATE \textbf{Output:} Optimized parameters $\bar\bftheta_{N}$.
\ENDFOR
\end{algorithmic}
\caption{Derivative-free algorithm for solving $\min_{\bftheta} {\cal D}(F(\bftheta))$ with memory\label{alg2}}
\end{algorithm}
This version of the algorithm is similar to the variance reduction SGD method
proposed in \cite{2012arXiv1202.6258L}. 
In practice, one allocates memory for, say $\ell$ vectors for $\bfQ$ and $\bfOmega$ and saves only the last $\ell$ vectors. It is important to see that this version of the method requires much less forward calculations at each step. We will show how this version of the code can be advantageous compared to the ``vanilla'' version presented in Algorithm~\ref{alg1}. 

\subsection{A Gauss-Newton like update}

The update of $\bar \bftheta$ proposed in \eqref{eqT} or \eqref{eqTMem} reduces to a steepest descent algorithm when we use a full basis to sample the problem.
However, the original EnKF algorithm uses a Kalman gain matrix to update the solution. It is well known that Kalman filtering and the Kalman gain can be interpreted as a Gauss-Newton iteration \cite{vd}. 
We can thus use the matrix $\bfQ$ to assess an approximation of the Hessian. Noting that
$$ \Ex\, \bfQ \bfQ^{\top} \approx \Ex\,  \bfJ \bfOmega\bfOmega^{\top} \bfJ^{\top}  = \sigma^2 k \bfJ  \bfJ^{\top} $$,
one can use the update
\begin{eqnarray}
\label{eqT2}
  \bar\bftheta_{j+1} = \bar\bftheta_j - \frac{\mu_j}{\sigma^2 k} \bfOmega \bfQ^{\top} (\bfQ \bfQ^{\top} + {\boldsymbol \Gamma})^{-1} 
\grad {\cal D}(\bar \bftheta_j)
\end{eqnarray}
where $\boldsymbol \Gamma$ is a data covariance matrix.
This update is similar to the common EnKF presented in the original work
\cite{eve00a}.
It is important to realize that the system \eqref{eqT2} needs not be formed and it is possible to use $k$ steps of the conjugate gradient method, preconditioned by $\boldsymbol \Gamma$ to solve the system exactly.

\section{Numerical examples}
\label{numex}

In this section, we present results from two numerical experiments. 
First, we demonstrate our algorithm's potential to converge on non-smooth objective functions. 
Second, we compare our method to a SGD scheme for the MNIST problem of classifying hand-written digits.

\subsection{Application to nonlinear regression}

We apply our method to regression problem described in Example~\ref{exam:oscillatory}, which involves the highly oscillatory forward problem
\begin{eqnarray}
\label{f1}
F(\bftheta) = \bfA \bftheta + \epsilon \sin(l\bfB \bftheta).
\end{eqnarray}
Here,  we choose $\bfA$ and $\bfB$ as random matrices of size
$300\times 200$, we set $l$ to $20$, and use $\epsilon=1$. 
Figure~\ref{fig1} visualizes the resulting objective function and the results obtained using both versions of our method as listed in Algorithm~\ref{alg1} and Algorithm~\ref{alg2} and varying the number of particles $k \in \{5, 10, 15, 20, 25\}$. 
As can be seen from the plot of the objective function, this problem is challenging for gradient-based methods.
Similar to implicit filtering methods~\cite{kelley3}, our method is better-equipped to deal with this non-smoothness because of the spatial sampling when approximating the gradient. 
We note that the convergence accelerates when more particles are used and the benefit of storing intermediate evaluations of the forward model is obvious.
\begin{figure}
\begin{center}
\includegraphics[width=\textwidth]{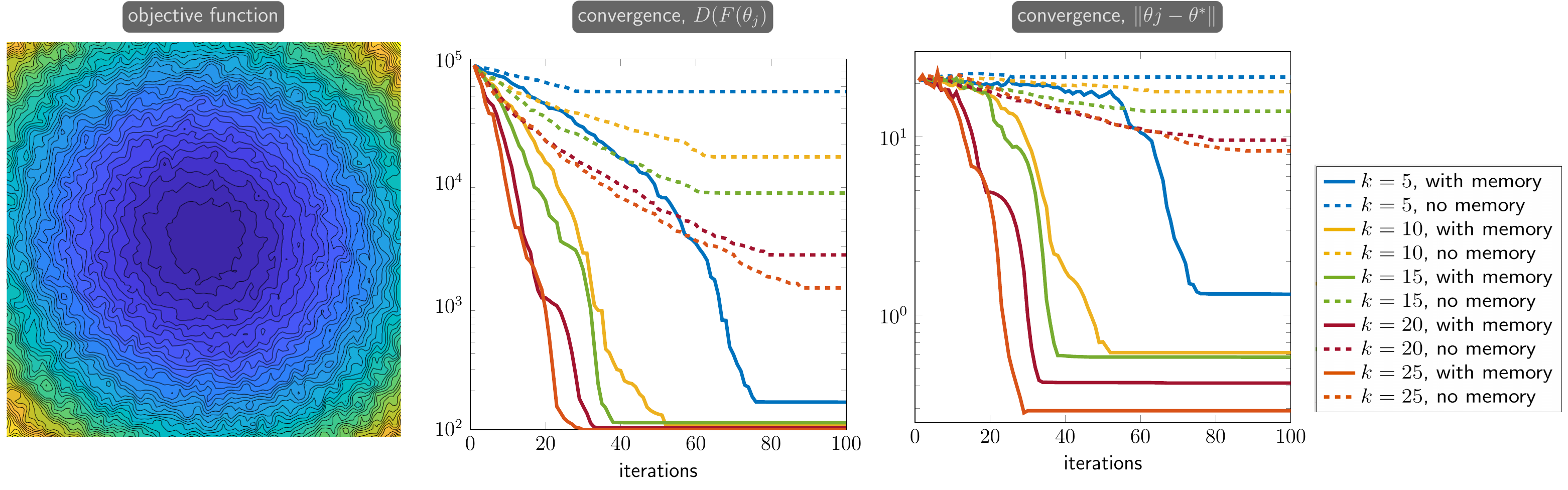}
\caption{Convergence of our methods for a regression problem with highly oscillatory forward operator as described in Example~\ref{exam:oscillatory}. Left: Plot of the objective function for $m=300, n=200, l=20,$ and $\epsilon=1$ along two randomly chosen orthogonal directions. Center: Reduction of the objective function value for different values of $k$ (indicated by colors) and different algorithms (dashed lines indicate Algorithm~\ref{alg1} and solid line represent Algorithm~\ref{alg2}). Right indicates the distance to the true solution. Note the improvements achieved by increasing $k$ and storing previous forward evaluations. 
\label{fig1}}
\end{center}
\end{figure}
\subsection{Image classification using Convolutional Neural Networks}

We use our method to train the weights of a simple neural network to classify the hand-written digits in the MNIST data set~\cite{LeCunEtAl1990}. 
The data set consists of 60,000 gray scale images of resolution $28\times28$ that is divided into 50,000 training and 10,000 test images associated with their labels.

Here, we denote by $F(\theta,\bfx)$ the forward propagation of an image $\bfx$ through the CNN consisting of two hidden layers. The first one uses a convolution with $5\times5$ stencils and  32 output channels and a ReLU activation function. This layer is followed by an average pooling operator and a second convolutional layer with $5\times5$ stencils,  64 output channels, and also ReLu activation. The last part of the forward model is  a second average pooling operator, reducing the image size to $7\times7$. The network has $52,000$ weights.

As common in classification using deep neural networks, a fully-connected layer followed by a softmax hypothesis function, $H$, is applied to the output of the neural network and the result is compared to the given label using a cross entropy loss function.  
In the notation of our method, we interpret these parts as being associated with the misfit function ${\cal D}$ but note that this function depends on the weights of the fully connected layer, denoted by $\bfW$.
Overall, denoting the image-label pairs by $\{(\bfx_i,\bfc_i) \}_{i=1}^s$ we phrase the learning problem as an optimization problem 
\begin{equation}\label{eq:optProb}
	\min_{\bfW,\bftheta} \frac{1}{s} \sum_{i=1}^s  D(H(\bfW,F(\bftheta,\bfx_i)) + R(\bfW,\bftheta),
\end{equation}
where  $R$ is a regularizer. Here, for simplicity we use a weight decay regularizer for $\bfW$ with a weight of $100$ and no regularization on $\bftheta$.

While the objective function in~\ref{eq:optProb} is non-convex in the weights $\bftheta$, it is convex with respect to $\bfW$ for the softmax cross entropy loss function and regularizer at hand. 
Hence, for a given choice of $\bftheta$ the optimal $\bfW^*(\bftheta)$ can be computed efficiently using, e.g., Newton's method, which is feasible in our case since the number of elements in $\bfW$ is only $31,370$.
We exploit this structure in our method in the following way.
In each iteration, we evaluate the forward operator $F$ for all the examples in the training data set. 
Then, we solve the classification problem using 10 iterations of an inexact Newton method using up to 20 iterations of a Conjugate Gradient (CG) scheme to determine the direction.
Keeping $\bfW^*(\theta)$ we approximate the Jacobian of the neural network by using Algorithm~\ref{alg1} with one mini batch consisting of 16 examples and four particles.
Line-search and updating of the current value of $\bftheta$ is done as described above. 

Figure~\ref{fig2} shows the convergence of our method in comparison with the stochastic gradient method ADAM with a mini batch size of 16 and a learning rate $\eta_j=10^{-3}/\sqrt{j}$.
The number of epochs is chosen so that an equal number of forward propagations is performed.
Note that the $x$-axis scales with the number of forward propagations and we do not account for back-propagation in ADAM and the classification solves in our method. 
ADAM generalizes slightly better in this case, achieving a test accuracy of (99.12\% vs. 
98.38\%) but our method converges considerably faster both in terms of forward propagations and in terms of runtime (238.5 vs. 1,751 secs on a TITAN X with CUDNN 7.1 in MATLAB2017b).

\begin{figure}[t]
	\begin{center}
		\includegraphics[width=.85\textwidth]{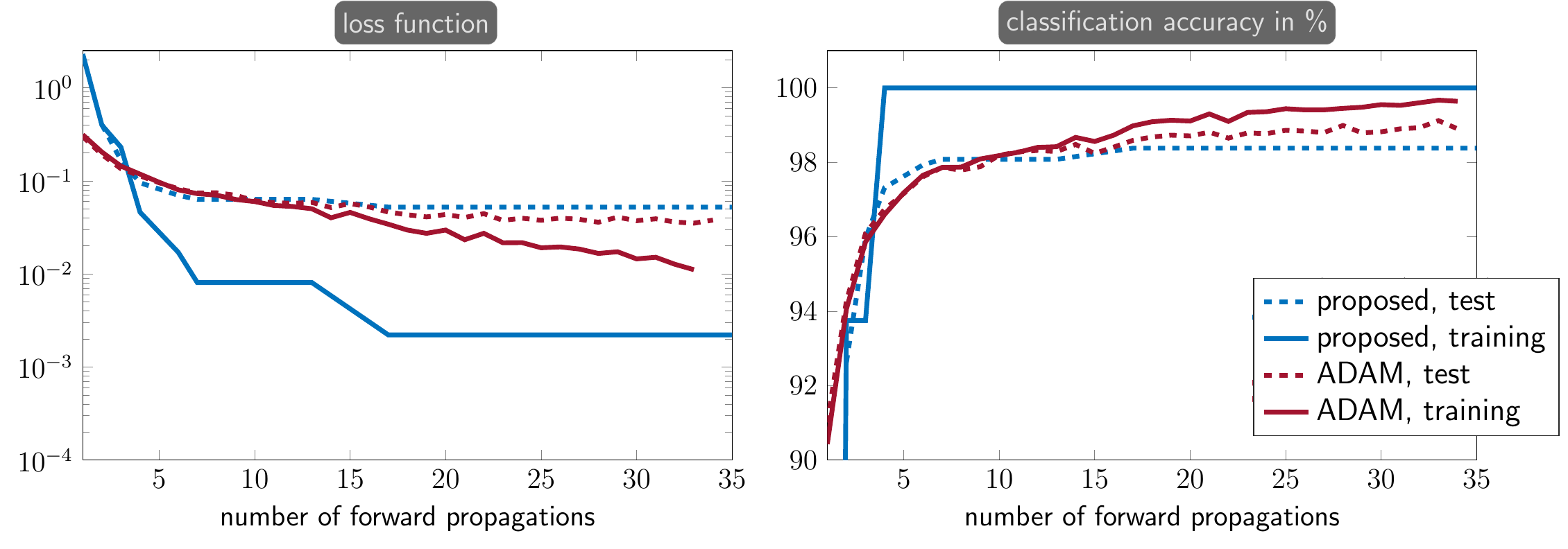}
	\end{center}
	\caption{Convergence of our derivative-free method with variable projection (blue) and the stochastic gradient method ADAM  (red) for training a convolutional neural network with 2 layers to classify the MNIST dataset. Left: Convergence of the loss function on the training (solid line) and test data (dashed line). Right: classification accuracy. The $x$-axis is scaled according to the number of forward propagations. It can be seen that  ADAM generalizes slightly better while our proposed method converges considerably faster. }
	\label{fig2}
\end{figure}

\section{Conclusions}
\label{sec:conc}
In this paper, we present a new derivative-free method for minimizing functions that are a concatenation of a misfit or loss function and a forward operator. 
Our method is derivative-free in the sense that the Jacobian of the forward operator is estimated using its evaluations at randomly chosen points around the current iterate while the derivative of the misfit is computed exactly. 
Our method can be seen as a modified version of the EnKF with improved convergence properties for strongly convex problems.
Borrowing ideas from variance reduction methods we present an improved version of our method that stores intermediate evaluations of the forward operator. 

Our numerical experiments show two benefits of our method. 
First, we use a simple nonlinear least-squares problem with an oscillatory forward operator to demonstrate the smoothing property of our method. In fact our method can be seen as a randomized version of the implicit filtering~\cite{kelley3} applied to a randomly chosen low-dimensional subspace. 
Second, motivated by the works~\cite{KovachkiStuart2018, KovachkiStuart2018slides},  we show that our method achieves comparable classification results to the SGS variant ADAM for the MNIST data set. In particular, our method parallelizes better and achieves a lower loss value within only a few forward propagations. Notable only four particles are used to optimize around 50,000 weights which motives the use of our method also for more challenging learning tasks.

\section{Acknowledgements}
We thank Nikola Kovachki and Andrew M. Stuart (Caltech) for sharing their results on applying EnKF to neural networks~\cite{KovachkiStuart2018, KovachkiStuart2018slides} that motivated this work.
LR's work is supported by the US National Science Foundation (NSF) awards DMS 1522599 and DMS 1751636. FL's is supported in parts by the European Union's Horizon 2020 research and innovation programme H2020 ICT 2016-2017 under grant agreement No 732411 (as an initiative of the Photonics Public Private Partnership) and the Netherlands Organisation for Scientific Research (NWO 613.009.106/2383).

\footnotesize
\setlength{\bibsep}{1pt}
  \bibliographystyle{plain}

\end{document}